\newcommand{\eps}{\varepsilon}
\newcommand{\R}{\mathbb R}
\newcommand{\CC}{\mathcal C}
\newcommand{\abs}[1]{\left\vert#1\right\vert}
\newcommand{\norm}[1]{\left\lVert#1\right\rVert}
\newcommand{\set}[1]{\left\{#1\right\}}
\newcommand{\ex}[1]{\mathsf{E}\left[#1\right]}
\newcommand{\ind}[1]{\mathbbm{1}_{#1}}
\newcommand{\oZ}{\overline{Z}}
\newcommand{\oX}{\overline{X}}
\numberwithin{equation}{section} \theoremstyle{plain}
\newtheorem{theorem}{Theorem}[section]
\newtheorem{lemma}[theorem]{Lemma}
\newtheorem{proposition}[theorem]{Proposition}
\theoremstyle{definition}
\newtheorem{remark}[theorem]{Remark}
\begin{document}
\selectlanguage{english}

\title{Mixed stochastic delay differential equations}

%    Information for first author

\author{Georgiy Shevchenko}
\address{Department of Probability Theory, Statistics and Actuarial Mathematics,
Taras Shevchenko National University of Kyiv,
64 Volodymyrska, 01601 Kyiv, Ukraine}
\email{zhora@univ.kiev.ua}

\begin{abstract}
We consider a stochastic delay differential equation driven by a H\"older continuous process $Z$ and a Wiener process. Under fairly general assumptions on coefficients of the equation, we prove that it has a unique solution. We also give s sufficient condition for finiteness of  moments of the solution and prove that the solution depends on the driver $Z$ continuously.
\end{abstract}
\subjclass[2010]{60H10, 34K50, 60G22}
\keywords{Fractional Brownian motion; Wiener process;  stochastic delay differential equation; mixed stochastic differential equation}

\maketitle

\section{Introduction}

This paper is devoted to a stochastic differential equation of the form
 \begin{equation*}
 X(t) =X(0) +\int_0^t a(s,X)ds+
\int_0^tb(s,X)dW(s)+\int_0^tc(s,X)dZ(s),
 \end{equation*}
where $W$ is a Wiener process, $Z$ is a H\"older continuous  process with H\"older exponent greater than $1/2$, the coefficients $a,b,c$ depend on the past of the process $X$. The integral with respect to $W$ is understood in the usual It\^o sense, while the one with respect to $Z$ is understood in the pathwise sense. (A precise definition of all objects is given in Section~\ref{sec:prelim}.)  We will call this equation a \textit{mixed stochastic delay differential equation}; the word \textit{mixed} refers to the mixed nature of noise, while the word \textit{delay} is due to dependence of the coefficients on the past.
 
In the pure Wiener case, where $c=0$, this equation was considered by many authors, often by the name ``stochastic functional differential equation''. For overview of their results we refer a reader to \cite{mao, mohammed}, where also the importance of such equations is explained, and several particular results arising in applications are given.

In the pure ``fractional'' case, where $b=0$, there are only few results devoted to such equations, considering usually the case where $Z=B^H$ is a fractional Brownian motion (for us, it is also the most important example of the driver $Z$). In \cite{ferrante-rovira,ferrante-rovira1}, the existence of a solution is shown for the coefficients of the form $a(t,X)=a(X(t))$, $b(t,X)= b(X(t-r))$, and $H>1/2$. It is also proved that the solution has a smooth density, and the convergence of solutions is established for a vanishing delay. A similar equation constrained to stay non-negative is considered in \cite{besalu-rovira}. Existence and uniqueness of solution for an equation with general coefficients, also in the case $H>1/2$, are established in \cite{boufoussi-hajji1, leon-tindel}. For such equation, it is proved in \cite{leon-tindel} that the solution possesses infinitely differentiable density, and in \cite{schmalfuss}, that the solution generates a continuous random dynamical system. In \cite{tindel-neuenkirch-nourdin}, 
the unique solvability is established for an equation with $H>1/3$ and coefficients of the form $f(X(t),X(t-r_1),X(t-r_2),\dots)$. 

Concerning mixed stochastic delay differential equations, there are no results known to author. There are some literature devoted to mixed equations without delay. The existence and uniqueness were proved, under different conditions, in \cite{guerra-nualart,kubilius,mbfbm-sde,mbfbm-limit,mbfbm-jumps}. Integrability and convergence results for mixed equations were established in \cite{mbfbm-limit,mbfbm-integr,mbfbm-jumps,mbfbm-malliavin}, and Malliavin regularity was proved in \cite{mbfbm-malliavin}.

In this  paper we show that a mixed stochastic delay differential equation has  a unique solution under rather general assumptions about coefficients. We also provide a condition for the solution to have finite moments of all orders, and a result on the continuity of the solution with respect to the driver $Z$. The latter result allows, in particular, to approximate the solution to a mixed stochastic delay differential equation by solutions to usual stochastic delay differential equations having a random drift.

\section{Preliminaries}\label{sec:prelim}

Let $\bigl(\Omega, \mathcal{F}, \mathbb{F} = \{\mathcal{F}_t, t\ge 0\},
\mathsf{P}\bigr)$ be a complete filtered probability space satisfying the usual assumptions.  

First we fix some notation: throughout the article, $\abs{\cdot}$ will denote the absolute value of a real number, the Euclidean norm of a vector, or the operator norm of a matrix. The symbol $C$ will denote a generic constant, whose
value may change from one line to another. To emphasize its dependence on some parameters, we will put them into subscripts.

We need some notation in order to introduce the main object. For a fixed $r>0$, let $\CC = C([-r,0];\R^d)$ be the Banach space of continuous $\R^d$-valued functions defined on the interval $[-r,0]$ endowed with the supremum norm $\norm{\cdot}_{\CC}$. For a stochastic process $\xi = \set{\xi(t),t\in[-r,T]}$ and $t\in[0,T]$ define a \textit{segment} $\xi_t\in \CC$ by $\xi_t(s) = \xi(t+s)$, $s\in[-r,0]$. Let $a\colon [0,T]\times \CC\to\R^d$, $b_i\colon [0,T]\times \CC\to\R^d$, $i=1,\dots,m$, $c_j\colon  [0,T]\times \CC\to\R^d$, $j=1,\dots,l$, be measurable functions, $Z = \set{Z(t),t\in[0,T]}$ be an $\mathbb{F}$-adapted process in $\R^l$ such that its trajectories are almost surely H\"older continuous of order $\gamma>1/2$. Let also $\eta\colon[-r,0]\to \R^d$ be a $\theta$-H\"older continuous function with $\theta>1-\gamma$. 

Our main object is the following stochastic delay differential equation in $\R^d$:
\begin{equation}\label{sdde-coord}
X(t) = X(0) + \int_{0}^{t} a(s,X_s)ds + \sum_{i=1}^{m} \int_0^t b_i(s,X_s)dW_i(s) + \sum_{j=1}^{l}\int_0^t c_j(s,X_s)dZ_j(s),\quad t\in[0,T],
\end{equation}
with the ``initial condition'' $X(s) = \eta(s)$, $s\in[-r,0]$. In the rest of the paper a shorter notation will be used for equation \eqref{sdde-coord} and its ingredients:
\begin{equation}\label{main-sdde}
X(t) = X(0) + \int_{0}^{t} a(s,X_s)ds +  \int_0^t b(s,X_s)dW(s) + \int_0^t c(s,X_s)dZ(s).
\end{equation}
We remark that it is possible to consider an equation with coefficients depending on the whole past of the process $X$. This can be achieved by just taking $r=T$

The integral with respect to $W$ in \eqref{main-sdde} will be understood in the It\^o sense. The integral with respect to $Z$ is a generalized Lebesgue--Stieltjes integral, defined as follows \cite{Zah98a}. 
For $\alpha\in(0,1)$, define the fractional derivatives
\begin{gather*}
\big(D_{a+}^{\alpha}f\big)(x)=\frac{1}{\Gamma(1-\alpha)}\bigg(\frac{f(x)}{(x-a)^\alpha}+\alpha
\int_{a}^x\frac{f(x)-f(u)}{(x-u)^{1+\alpha}}du\bigg),\\
\big(D_{b-}^{1-\alpha}g\big)(x)=\frac{e^{-i\pi
\alpha}}{\Gamma(\alpha)}\bigg(\frac{g(x)}{(b-x)^{1-\alpha}}+(1-\alpha)
\int_{x}^b\frac{g(x)-g(u)}{(u-x)^{2-\alpha}}du\bigg).
\end{gather*}
Assuming that $D_{a+}^{\alpha}f\in L_1[a,b], \ D_{b-}^{1-\alpha}g_{b-}\in
L_\infty[a,b]$, where $g_{b-}(x) = g(x) - g(b)$, the generalized (fractional) Lebesgue-Stieltjes integral $\int_a^bf(x)dg(x)$ is defined as
\begin{equation*}\int_a^bf(x)dg(x)=e^{i\pi\alpha}\int_a^b\big(D_{a+}^{\alpha}f\big)(x)\big(D_{b-}^{1-\alpha}g_{b-}\big)(x)dx.
\end{equation*}
Moreover, we have the estimate
\begin{equation}\label{integr-estim}
\abs{\int_a^b f(x) dg(x)}\le C\norm{g}_{0,\alpha;[a,b]}\int_a^b \left(\frac{\abs{f(s)}}{(s-a)^{\alpha}} + \int_a^s \frac{\abs{f(s)-f(u)}}{(s-u)^{1+\alpha}}du \right)ds, 
\end{equation}
where 
$$
\norm{g}_{\alpha;[a,b]} = \sup_{a\le u<v\le b} \left(\frac{\abs{g(v)-g(u)}}{(v-u)^{1-\alpha}} + \int_u^v \frac{\abs{g(u)-g(z)}}{(z-u)^{2-\alpha}}dz\right).
$$
In what follows we fix some $\alpha\in(1-\gamma,\theta\wedge 1/2)$ and put $h(t,s)=(t-s)^{-1-\alpha}$.
Define $\norm{X}_{\infty,t} = \sup_{s\in[-r,t]}\abs{X(s)}$, $\norm{X}_{1,t} = \int_0^t \norm{X_{\cdot+t-s}-X_\cdot}_{\infty,s} h(t,s)ds$, $\norm{X}_{t} = \norm{X}_{\infty,t}+\norm{X}_{1,t}$. It is clear that both $\norm{X}_{\infty,t}$ and $\norm{X}_{1,t}$ are non-decreasing in $t$.

By a solution to equation \eqref{main-sdde}, we will understand a pathwise continuous $\mathbb{F}$-adapted process $X$ such that
$\norm{X}_T<\infty$ a.s., and \eqref{main-sdde} holds almost surely for all $t\in[0,T]$.

The following assumptions on the coefficients
of \eqref{main-sdde} will be assumed throughout the article:
\begin{enumerate}[H1.]
\item
Linear growth: for all $\psi\in\CC$, $t\in[0,T]$,
\begin{gather*}
|a(t,\psi)|+|b(t,\psi)|+|c(t,\psi)|\leq C(1+\norm{\psi}_{\CC}).
\end{gather*}
\item
For all $t\in[0,T]$, $\psi\in\CC$,  $c$ has a Fr\'echet derivative  $\partial_\psi c(t,\psi)\in L(\CC, \R^d)$, bounded uniformly in $t\in[0,T],\psi\in\CC$:
$$
\norm{\partial_{\psi}c(t,\psi)}_{L(\CC, \R^d)}\le C.
$$

\item The functions $a$, $b$ and $\partial_\psi c$ are locally Lipschitz continuous in $\psi$: for any $R>1$, $t\in[0,T]$, and all $\psi_1,\psi_2\in\CC$ with $\norm{\psi_1}_\CC\le R$,  $\norm{\psi_2}_\CC\le R$,
$$|a(t,\psi_1)-a(t,\psi_2)|+|b(t,\psi_1)-b(t,\psi_2)|+\norm{\partial_\psi c(t,\psi_1)-\partial_\psi c(t,\psi_2)}_{L(\CC, \R^d)}\leq C_R\norm{\psi_1-\psi_2}_{\CC}.$$
\item The functions $ c$ and $\partial_\psi c$ are H\"older continuous in $t$: for some  $\beta\in(1-\gamma,1)$ and for all $s,t\in[0,T]$, $\psi\in\CC$
$$|c(s,\psi)-c(t,\psi)|\le C|s-t|^\beta(1+\norm{\psi}_\CC),\quad \norm{\partial_\psi c(s,\psi)-\partial_\psi c(t,\psi)}_{L(\CC, \R^d)}\leq C|s-t|^\beta.$$
\end{enumerate}
The condition H4 allows, for instance, to consider an important particular case, namely, a linear equation.

\section{Auxiliary results}
First we establish some a priori estimates for the solution of \eqref{main-sdde}. %The proofs of the following lemmas are highly technical, so they will be given in the Appendix.

\begin{lemma}\label{prop-apriormoment}
Let $X$ be a solution of \eqref{main-sdde}, and $p\ge 1$, $N\ge 1$. Let also  $A_{N,t} = \set{\norm{Z}_{\alpha;[0,t]}\le N}$ for $t\in[0,T]$.
 Then 
$$
\ex{\norm{X}_{T}^p\ind{A_{N,T}}}\le C_{N,p}.
$$
\end{lemma}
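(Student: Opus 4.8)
The plan is to reduce the statement to a Gronwall-type integral inequality for a moment functional and then to close it. Since a solution is only assumed to satisfy $\norm{X}_T<\infty$ almost surely, I would first localize: set $\tau_M=\inf\set{t\ge 0:\norm{X}_{\infty,t}\ge M}\wedge T$, prove the asserted bound for the stopped process $X(\cdot\wedge\tau_M)$ with a constant $C_{N,p}$ not depending on $M$, and recover the claim by letting $M\to\infty$ through Fatou's lemma. After stopping, the coefficients are bounded along the trajectory (by H1), so all moments below are finite and the stochastic estimates are legitimate. The decisive simplification is that on the event $A_{N,t}$ the seminorm $\norm{Z}_{\alpha;[0,t]}$ entering \eqref{integr-estim} is at most $N$, so every $Z$-driven contribution is controlled deterministically, up to the factor $N$, by the coefficient data.

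For the pathwise part I would apply \eqref{integr-estim} to $\int_s^t c(u,X_u)\,dZ(u)$ and bound the integrand using the hypotheses: the pointwise term is handled by H1, giving $\abs{c(u,X_u)}\le C(1+\norm{X}_{\infty,u})$, while the increment $\abs{c(u,X_u)-c(v,X_v)}$ is split into a time part, controlled by H4 as $C\abs{u-v}^{\beta}(1+\norm{X}_{\infty,u})$, and a segment part, controlled by the bounded Fr\'echet derivative H2 as $C\norm{X_u-X_v}_{\CC}$. The point is that $\norm{X_u-X_v}_{\CC}=\sup_{w\in[-r,0]}\abs{X(u+w)-X(v+w)}$ is essentially the increment appearing in the seminorm $\norm{X}_{1,t}$ (up to the shift by $r$), so the double integral $\int_0^t\int_0^s\norm{X_u-X_v}_{\CC}(s-u)^{-1-\alpha}\,du\,ds$ is reabsorbed into $\norm{X}_{1,\cdot}$, whereas the time part is integrable because the exponent of $\abs{u-v}^{\beta}(u-v)^{-1-\alpha}$ satisfies $\beta-1-\alpha>-1$ once $\alpha$ is fixed with $\alpha<\beta$. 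The drift is immediate from H1, since $\abs{\int_s^t a(u,X_u)\,du}\le C\int_s^t(1+\norm{X}_{\infty,u})\,du$.

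For the It\^o part I would estimate the $p$-th moments of $\int_s^t b(u,X_u)\,dW(u)$, and of the supremum of its increments over a window, by the Burkholder--Davis--Gundy inequality together with H1 (and a Garsia--Rodemich--Rumsey bound to pass from the increments to their supremum over a window). Here the restriction $\alpha<1/2$ is essential: it is precisely what makes $\int_0^t\tau^{1/2}\tau^{-1-\alpha}\,d\tau$ converge, so that the contribution of the martingale increments to $\norm{X}_{1,t}$ is finite in $L^p$. Collecting the three parts, taking expectations, and using the monotonicity of the seminorms together with $A_{N,T}\subseteq A_{N,t}$, I would arrive, for $\phi(t):=\ex{\norm{X}_{t\wedge\tau_M}^p\ind{A_{N,t}}}$, at an inequality of the form $\phi(t)\le C_{N,p}\bigl(1+\int_0^t\phi(s)\,d\kappa_N(s)\bigr)$, with a measure $d\kappa_N$ that absorbs the weakly singular fractional kernel; a generalized (singular-kernel) Gronwall lemma then yields $\phi(T)\le C_{N,p}$ uniformly in $M$.

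I expect the main obstacle to be the self-consistent treatment of the increment seminorm $\norm{\cdot}_{1,t}$. Controlling it forces one to estimate the increments $X(t)-X(s)$ of the solution itself by using the equation \eqref{main-sdde} again, and then to feed these estimates back through the singular kernel $h(t,s)=(t-s)^{-1-\alpha}$, all while the delay shifts the segments by $r$. Reconciling the two natures of the bounds within a single Gronwall scheme --- the deterministic fractional estimates valid on $A_{N,t}$ and the $L^p$ martingale estimates from the It\^o term --- and checking that the resulting kernel is integrable enough for the Gronwall conclusion (which is exactly what the standing relations $\gamma>1/2$, $\alpha<1/2$ and $\alpha<\beta$ guarantee) is the crux of the argument.
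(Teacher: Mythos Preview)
Your proposal is correct and follows essentially the same route as the paper: pathwise control of the $Z$-integral via \eqref{integr-estim} together with H1, H2, H4; Burkholder--Davis--Gundy plus Garsia--Rodemich--Rumsey for the It\^o contribution to both $\norm{\cdot}_{\infty,t}$ and $\norm{\cdot}_{1,t}$ (using $\alpha<1/2$); and a singular-kernel Gronwall lemma to close. The only notable difference is in the localization: the paper truncates with the indicator of $\set{\norm{X}_{\infty,t}+\norm{X}_{1,t}\le R}$ (so that both seminorms are bounded a priori and the indicator can be pulled inside the stochastic integral), whereas you stop on $\norm{X}_{\infty,t}$ alone---this works too, but you should be aware that the finiteness of $\ex{\norm{X^M}_{1,t}^p}$ is not immediate from the stopping and must itself be extracted from the equation before the Gronwall step.
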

\begin{proof} %[Proof of Lemma~\ref{prop-apriormoment}]
Assume without loss of generality that $p>4/(1-2\alpha)$.

For $R>0$ define $B_{R,t} = \set{\norm{X}_{\infty,t} + \norm{X}_{1,t}\le R}$ and $\ind{t} = \ind{A_{N,t}\cap B_{R,t}}$.

Let $\omega\in A_{N,t}$. Write for $t\in[0,T]$
\begin{gather*}
\abs{X(t)}\le \abs{X(0)} + \abs{I^a(t)} + \abs{I^b(t)} + \abs{I^c(t)},
\end{gather*}
where $I^a(t) = \int_0^t a(s,X_s) ds$, $I^b(t)=\int_0^t b(s,X_s) dW(s)$, $I^c(t) = \int_0^t c(s,X_s) dZ(s)$. Estimate, using \eqref{integr-estim},
\begin{align*}
\abs{I^a(t)} &\le \int_0^t \abs{a(s,X_s)}ds \le C\int_0^t \left(1+\norm{X_s}_{\CC}\right)ds\le C\left(1+\int_0^t \norm{X}_{\infty,s}ds\right);\\
\abs{I^c(t)} &\le CN\int_0^t\left(\abs{c(s,X_s)}s^{-\alpha} + \int_0^s \abs{c(s,X_s) - c(u,X_u)} h(s,u)du\right)ds\\
&\le C N \int_0^t\left(\left(1+\norm{X_s}_{\CC}\right)s^{-\alpha} + \int_0^s \left(\abs{s-u}^{\beta}(1+\norm{X_s}_\CC) + \norm{X_s - X_u}_{\CC}\right) h(s,u)du\right)ds\\
&\le CN\left(1+ \int_0^t\left(\norm{X}_{\infty,s}s^{-\alpha} + \norm{X}_{1,s}\right)ds \right).
\end{align*}
Therefore, we have
\begin{equation*}
\abs{X(t)}\le CN\left(1+ \int_0^t\left(\norm{X}_{\infty,s} s^{-\alpha} + \norm{X}_{1,s}\right)ds \right) + \abs{I^b(t)},
\end{equation*}
whence 
\begin{equation}\label{normxit}
\norm{X}_{\infty,t}\le  CN\left(1+\int_0^t\left(\norm{X}_{\infty,s} s^{-\alpha} + \norm{X}_{1,s}\right)ds \right) + \norm{I^b}_{\infty;[0,t]} .
\end{equation}
Further, let $0\le s\le t$. Then for $u\le s-t$,
\begin{align*}
\abs{X(u+t-s)-X(u)} = \abs{\eta(u+t-s)-\eta(u)}\le H_\eta(t-s)^\theta,
%\norm{X}_{1,t} = \int_0^t \sup_{u\in[-r,s]}\abs{X(u+t-s)-X(u)}h(t,s)ds \le \norm{I^a}_{1,t} + \norm{I^b}_{1,t} + \norm{I^c}_{1,t}
\end{align*}
where $H_\eta = \sup_{-r\le x<y\le 0}\frac{\abs{\eta(y)-\eta(x)}}{(y-x)^\theta}$ is the $\theta$-H\"older seminorm of $\eta$. Similarly, for $u\in(s-t,0]$,
\begin{align*}
\abs{X(u+t-s)-X(u)} & \le \abs{X(u+t-s)-X(0)} +\abs{\eta(0)-\eta(u)}\\
& \le \abs{X(u+t-s)-X(0)} + H_\eta(t-s)^\theta.
%\norm{X}_{1,t} = \int_0^t \sup_{u\in[-r,s]}\abs{X(u+t-s)-X(u)}h(t,s)ds \le \norm{I^a}_{1,t} + \norm{I^b}_{1,t} + \norm{I^c}_{1,t}
\end{align*}
Consequently, we can write
$$
\norm{X}_{1,t} \le  H_\eta\int_{0}^{t}(t-s)^{\theta+\alpha-1} ds +  J^a(t) + J^b(t) + J^c(t)\le C + J^a(t) + J^b(t) + J^c(t),$$
where $J^b(t) = \int_0^t \sup_{u\in[s-t,s]}\abs{\int_{u\vee 0}^{u+t-s} b(v,X_v) dW(v)}h(t,s)ds$,
\begin{align*}
J^a(t) &= \int_0^t \sup_{u\in[s-t,s]}\abs{\int_{u\vee 0}^{u+t-s} a(v,X_v) dv }h(t,s)ds\\&\le C\int_0^t \max_{u\in[s-t,s]}\int_{u\vee 0}^{u+t-s} \big(1+\norm{X_{v}}_{\CC}\big)dv\,h(t,s)ds \\
&\le C\left(1+\int_{0}^{t}\int_{s}^{t} \norm{X}_{\infty,z} dz\,h(t,s)ds \right)\le C\left(1+\int_{0}^{t}  \norm{X}_{\infty,z} (t-z)^{-\alpha}dz \right);
\\ J^c(t)&= \int_0^t \sup_{u\in[s-t,s]}\abs{\int_{u\vee 0}^{u+t-s} c(v,X_v) dZ(v)}h(t,s)ds \le CN(J^c_1(t)+J^c_2(t)
\end{align*}
with
\begin{align*}
J^c_1(t) & = \int_0^t \max_{u\in[s-t,s]}\int_{u\vee 0}^{u+t-s}\abs{c(v,X_v)}(v-u\vee 0)^{-\alpha}dv\, h(t,s)ds \\&\le C\int_0^t \max_{u\in[-r,s]}\int_{u\vee 0}^{u+t-s}\big(1+ \norm{X_{v}}_{\CC}\big)(v-u\vee 0)^{-\alpha}dv\, h(t,s)ds\\
& \le C\left(1+\int_0^t \int_s^t \norm{X}_{\infty,z} (z-s)^{-\alpha}dz\,h(t,s)ds\right)\\
& \le C \left(1+\int_0^t \norm{X}_{\infty,z} (t-z)^{-2\alpha}dz\right);\\
J^c_2(t) & = \int_0^t \max_{u\in[-r,s]}\int_{u\vee 0}^{u+t-s}\int_{u\vee 0}^{v}
\abs{c(v,X_v)-c(z,X_z)}h(v,z)dz\, dv\, h(t,s)ds\\
&\le C\int_0^t \max_{u\in[-r,s]}\int_{u\vee 0}^{u+t-s}\int_{u\vee 0}^{v}
\left(\abs{v-z}^\beta +
\norm{X_v-X_z}_\CC\right)h(v,z) dz\,dv\, h(t,s)ds
\\ &\le C\int_0^t \max_{u\in[-r,s]}\int_{u\vee 0}^{u+t-s}\left(\abs{v-u\vee 0}^{\beta-\alpha} + \norm{X}_{1,v}\right)dv\, h(t,s)ds\\
& \le C \int_0^t \left((t-s)^{\beta-2\alpha} + \int_s^t \norm{X}_{1,v}dv\,h(t,s) \right)ds \le C \left(1+ \int_0^t \norm{X}_{1,v}(t-v)^{-\alpha}dv \right).
%\\
%& \le C\left(1+ \int_0^t \int_z^t \left(\norm{X}_{z} +\max_{[-r,0]} \int\right)\right)
\end{align*}
To estimate $J^c_1$, we have used the computation
\begin{align*}
&\int_0^z (z-s)^{-\alpha} (t-s)^{-1-\alpha} ds  = \Big|s = z - (t-z)v \Big| = (t-z)^{-2\alpha} \int_0^{\frac{z}{t-z}} v^{-\alpha} (1+v)^{-1-\alpha} dv \\&\qquad\le
(t-z)^{-2\alpha} \int_0^{\infty} v^{-\alpha} (1+v)^{-1-\alpha} dv = \mathrm{B}(1-\alpha,2\alpha) (t-z)^{-2\alpha}.
\end{align*}
Summing the estimates for $\norm{X}_{1,t}$, we get
\begin{equation}\label{normx1t}
\norm{X}_{1,t} \le CN\left(1+ \int_0^t\left(\norm{X}_{\infty,s}(t-s)^{-2\alpha} + \norm{X}_{1,s}(t-s)^{-\alpha}\right)ds \right) + J^b(t).
\end{equation}
Combining this with \eqref{normxit},  we obtain 
$$
\norm{X}_t\le CN\int_0^t \norm{X}_sg(t,s)ds + \norm{I^b}_{\infty;[0,t]} + J^b(t)
$$
for $\omega \in A_{N,t}$, where $g(t,s) = s^{-\alpha}+(t-s)^{-2\alpha}$.

Using the H\"older inequality, we can estimate
\begin{align*}
\norm{X}^p_t\le  C_{p}N^p\int_0^t \norm{X}^p_s g(t,s)ds
\left(\int_0^tg(t,s)ds\right)^{p/q}  + C_p\left(\norm{I^b}_{\infty;[0,t]}^p + \left(J^b(t)\right)^p\right),
\end{align*}
whence
\begin{equation}\label{normxt}
\ex{\norm{X}^p_t\ind{t}}\le C_{N,p}\left(\int_0^t \ex{\norm{X}^p_s\ind{s}}g(t,s)ds
  + \ex{\norm{I^b}_{\infty;[0,t]}^p\ind{t}} + \ex{\left(J^b(t)\right)^p\ind{t}}\right).
\end{equation}
We now proceed to the estimation of the last two expressions.
It is obvious that for any $0\le u\le s\le t$, 
$$
\abs{\int_{u}^{s} b(v,X_v)dW(v)}\ind{t} \le 
 \abs{\int_{u}^{s} b(v,X_v)\ind{v}dW(v)}.
$$
Therefore, by the Burkholder inequality,
\begin{align*}
& \ex{\norm{I^b}_{\infty;[0,t]}^p\ind{t}} = \ex{\sup_{s\in[0,t]}\abs{\int_0^s b(v,X_v)dW(v) }^p\ind{t}}\le \ex{\sup_{s\in[0,t]}\abs{\int_0^s b(v,X_v)\ind{v}dW(v) }^p}\\
&\qquad \le C_p\ex{\left(\int_0^t \abs{b(s,X_s)}^2\ind{s}ds\right)^{p/2}}\le C_p\int_0^t\ex{\left(1+ \norm{X_s}_\CC\ind{s}\right)^p} ds\\&\qquad \le C_p\int_0^t\left(1+\ex{ \norm{X_s}_{\CC}^p\ind{s}}\right)ds\le C_p \left(1+ \int_{0}^{t}\ex{ \norm{X}_{\infty,s}^p\ind{s}}ds\right).
\end{align*}
Further, we have
\begin{equation}\label{Jbt}
\ex{\left(J^b(t)\right)^p\ind{t}} \le C_p \ex{\left(\int_0^t\sup_{u\in[s-t,s]}\abs{ \int_{u\vee 0}^{u+t-s} b(v,X_v)\ind{v} dW(v)} h(t,s)ds \right)^p}
\end{equation}

It follows from the Garsia--Rodemich--Rumsey inequality that
for any $r,z\in[0,t]$
$$\abs{\int_{r}^{z} b(v,X_v)\ind{v} dW(v)}\le C_p \xi(t)\abs{r-z}^{1/2-2/p},$$ where
$$
\xi(t) = \left(\int_0^t \int_0^y \frac{\abs{\int_x^y b(v,X_v)\ind{v} dW(v)}^{p}} {\abs{x-y}^{p/2}}dx\,dy\right)^{1/p}.
$$
We can estimate
\begin{align*}
&\ex{\xi(t)^p}= \int_0^t \int_0^y \frac{\ex{\abs{\int_x^y b(v,X_v)\ind{v} dW(v)}^{p}}} {\abs{x-y}^{p/2}}dx\,dy\\
&\qquad\le C_p \int_0^t \int_0^y \frac{\ex{\left(\int_x^y (1+\norm{X_v}_{\CC}^2)\ind{v}dv\right)^{p/2}}} {(y-x)^{p/2}} dx\,dy\\
&\qquad\le C_p \int_0^t \int_0^y \frac{(y-x)^{p/2-1}\ex{\int_x^y \big(1+\norm{X}_{\infty,v}^p\ind{v}\big)dv}} {(y-x)^{p/2}} dx\,dy %\le C_{p}\left(1+\int_0^t\int_0^y \int_x^y \ex{\norm{X^N}_{\infty,v}^p}dv (y-x)^{-1}dv\,dx\,dy  \right)
\\
&\qquad\le C_p \left(1+\int_0^t \int_0^y\ex{\norm{X}_{\infty,v}^p\ind{v}}\int_0^v   {(y-x)^{-1}}dx\, dv\,dy\right) \\
&\qquad= C_p \left(1+ \int_0^t \ex{\norm{X}_{\infty,v}^p\ind{v}} \int_v^t  \log \frac{y}{y-v}dy\, dv \right)\\
&\qquad\le C_p \left(1+ \int_0^t \ex{\norm{X}_{\infty,v}^p\ind{v}}dv\right).
\end{align*}
Therefore, taking into account that $p>4/(1-2\alpha)$, i.e. $2/p+1/\alpha-1/2<0$, we get from \eqref{Jbt}
\begin{align*}
&\ex{J^b(t)^p\ind{t}}\le C_p \ex{\xi(t)^p} \left(\int_0^t (t-s)^{-2/p-1/2-\alpha}ds\right)^p\le C_p \left(1+ \int_0^t \ex{\norm{X}_{\infty,v}^p\ind{v}}dv\right).
\end{align*}
Plugging the estimates of $I^b$ and $J^b$ into \eqref{normxt}, we get 
$$\ex{\norm{X}^p_t\ind{t}}\le C_{N,p}\left(1+\int_0^t \ex{\norm{X}^p_s\ind{s}}g(t,s)ds\right).
$$
Since $g(t,s)\le (T^{\alpha}+1)t^{2\alpha}s^{-2\alpha}(t-s)^{-2\alpha}$, 
we can apply the generalized Gronwall lemma \cite[Lemma 7.6]{NR} and obtain $\ex{\norm{X}^p_T\ind{T}}\le C_{N,p}$. By letting $R\to\infty$ and using the Fatou lemma, we arrive at the required statement.
\end{proof}

The following lemma establishes estimates for the distance between solutions of mixed stochastic delay differential equations with different drivers. To formulate it, assume that $\overline{Z}$ is another $\gamma$-H\"older $\mathbb{F}$-adapted process, and consider the equation
\begin{equation}\label{sdde-oX}
\oX(t) = X(0) + \int_{0}^{t} a(s,\oX_s)ds +  \int_0^t b(s,\oX_s)dW(s) + \int_0^t c(s,\oX_s)d\oZ(s)
\end{equation}
with the same initial condition $\oX(s) = \eta(s)$, $s\in[-r,0]$.  
\begin{lemma}\label{prop-quasicontract}
Let $X$ and $\oX$ be  solutions of \eqref{main-sdde} and \eqref{sdde-oX} respectively, $p\ge 4/(1-2\alpha)$, $N\ge 1$, $R\ge 1$. Assume also  that $\norm{Z}_{\alpha;[0,T]}\le N$ and $\norm{\oZ}_{\alpha;[0,T]}\le N$. Then
$$
\ex{\norm{X-\oX}_{\infty,T}^p\ind{B_{R,T}}}\le C_{N,R,p}\ex{\norm{Z-\oZ}^p_{\alpha;[0,T]}},
$$
where
$B_{R,t} = \set{\norm{X}_{t}\le R, \norm{\oX}_{t}\le R}$ for $t\in[0,T]$.
\end{lemma}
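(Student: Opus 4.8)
The plan is to set $Y = X - \oX$ and to derive, on the event $B_{R,t}$, a Gronwall-type integral inequality for $\norm{Y}_t$ whose only inhomogeneous term is a constant multiple of $\norm{Z-\oZ}_{\alpha;[0,t]}$, and then to pass to $p$-th moments exactly as at the end of the proof of Lemma~\ref{prop-apriormoment}. Since $X$ and $\oX$ share the initial condition $\eta$, we have $Y(s)=0$ for $s\in[-r,0]$, so the initial-condition contributions cancel and $Y(t) = I^a(t) + I^b(t) + I^c(t)$, where $I^a(t) = \int_0^t[a(s,X_s)-a(s,\oX_s)]\,ds$, $I^b(t)=\int_0^t[b(s,X_s)-b(s,\oX_s)]\,dW(s)$, and the last term is split according to the two sources of discrepancy,
$$
I^c(t) = \underbrace{\int_0^t \big[c(s,X_s)-c(s,\oX_s)\big]\,dZ(s)}_{I^{c,1}(t)} + \underbrace{\int_0^t c(s,\oX_s)\,d(Z-\oZ)(s)}_{I^{c,2}(t)}.
$$

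The term $I^{c,2}$ is the source of the forcing. Applying the integral estimate \eqref{integr-estim} with the H\"older norm $\norm{Z-\oZ}_{\alpha;[0,t]}$, bounding $\abs{c(s,\oX_s)}$ by H1 and the increments $\abs{c(s,\oX_s)-c(u,\oX_u)}$ by H4 (time) together with the global Lipschitz bound on $c$ coming from H2 (state), I would note that on $B_{R,t}$ the resulting integral factor is controlled by $\norm{\oX}_t\le R$, so that $\abs{I^{c,2}(t)}\ind{B_{R,t}}\le C_{R}\norm{Z-\oZ}_{\alpha;[0,t]}$, with no factor of $\norm{Y}$. The remaining terms are handled just as in Lemma~\ref{prop-apriormoment}: $I^a$ (local Lipschitz H3, since $\norm{X_s}_\CC,\norm{\oX_s}_\CC\le R$ on $B_{R,s}$) and $I^{c,1}$ (via \eqref{integr-estim} with $\norm{Z}_\alpha\le N$) produce Gronwall terms proportional to $\norm{Y}_{\infty,s}$ and $\norm{Y}_{1,s}$ integrated against the singular kernels $s^{-\alpha}$ and $(t-s)^{-2\alpha}$, while $I^b$ produces the martingale term $\norm{I^b}_{\infty;[0,t]}$ and a seminorm term $J^b(t)$.

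The one genuinely new estimate, which I expect to be the main obstacle, is the control of the increments of $\Delta(s):=c(s,X_s)-c(s,\oX_s)$ entering $I^{c,1}$. I would write, via the fundamental theorem of calculus and H2,
$$
\Delta(s)-\Delta(u) = \int_0^1 \Big(\partial_\psi c(s,\xi^\lambda_s)[Y_s-Y_u] + \big[\partial_\psi c(s,\xi^\lambda_s)-\partial_\psi c(u,\xi^\lambda_u)\big][Y_u]\Big)\,d\lambda,
$$
where $\xi^\lambda_v = \oX_v + \lambda Y_v$. On $B_{R,t}$ one has $\norm{\xi^\lambda_v}_\CC\le R$, so the uniform bound on $\partial_\psi c$ (H2) controls the first summand by $C\norm{Y_s-Y_u}_\CC$, while H3 (local Lipschitz in $\psi$) and H4 (H\"older in $t$) bound the operator difference in the second summand by $C\abs{s-u}^\beta + C_R\big(\norm{\oX_s-\oX_u}_\CC + \norm{Y_s-Y_u}_\CC\big)$. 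It is essential here to retain the genuine factor $\norm{Y_u}_\CC\le\norm{Y}_{\infty,u}$ multiplying this bracket rather than crudely bounding it by $2R$, so that the final estimate stays proportional to $\norm{Z-\oZ}_\alpha$ and no spurious constant forcing term appears. Integrating against $h(s,u)$, the $\abs{s-u}^\beta$ and $\norm{Y_s-Y_u}_\CC$ pieces give Gronwall terms in $\norm{Y}_{\infty,s}$ and $\norm{Y}_{1,s}$, whereas the $\norm{\oX_s-\oX_u}_\CC$ piece is absorbed into a Gronwall term after bounding $\norm{Y}_{\infty,u}\le\norm{Y}_{\infty,s}$ and estimating the residual increment integral by $\norm{\oX}_{1,s}\le R$. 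Collecting all contributions yields, for $\omega\in B_{R,t}$,
$$
\norm{Y}_t\le C_{N,R}\Big(\norm{Z-\oZ}_{\alpha;[0,t]} + \int_0^t \norm{Y}_s\, g(t,s)\,ds\Big) + \norm{I^b}_{\infty;[0,t]} + J^b(t),
$$
with $g(t,s)=s^{-\alpha}+(t-s)^{-2\alpha}$ as before.

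From here the argument mirrors the end of Lemma~\ref{prop-apriormoment}. Since $\norm{X}_t,\norm{\oX}_t$ are non-decreasing, $B_{R,t}$ is decreasing in $t$, hence $\ind{B_{R,t}}\le\ind{B_{R,s}}$ for $s\le t$ and the martingale terms may be localized. The Burkholder inequality and the Garsia--Rodemich--Rumsey estimate (using $p\ge 4/(1-2\alpha)$) then bound $\ex{\norm{I^b}_{\infty;[0,t]}^p\ind{B_{R,t}}}$ and $\ex{J^b(t)^p\ind{B_{R,t}}}$ by $C_{p}\int_0^t\ex{\norm{Y}_{\infty,v}^p\ind{B_{R,v}}}\,dv$, this time \emph{without} an additive constant, because $\abs{b(s,X_s)-b(s,\oX_s)}\le C_R\norm{Y}_{\infty,s}$ on $B_{R,s}$ carries a factor of $\norm{Y}$. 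Raising the displayed inequality to the $p$-th power by convexity and the H\"older inequality and taking expectations gives
$$
\ex{\norm{Y}_t^p\ind{B_{R,t}}}\le C_{N,R,p}\Big(\ex{\norm{Z-\oZ}_{\alpha;[0,T]}^p} + \int_0^t \ex{\norm{Y}_s^p\ind{B_{R,s}}}\,\tilde g(t,s)\,ds\Big),
$$
where $\tilde g$ is the kernel obtained from $g$ after the same majorization $g(t,s)\le (T^\alpha+1)t^{2\alpha}s^{-2\alpha}(t-s)^{-2\alpha}$ used previously. The generalized Gronwall lemma \cite[Lemma~7.6]{NR} then yields $\ex{\norm{Y}_T^p\ind{B_{R,T}}}\le C_{N,R,p}\ex{\norm{Z-\oZ}_{\alpha;[0,T]}^p}$, and since $\norm{Y}_{\infty,T}\le\norm{Y}_T$ this is precisely the claim.
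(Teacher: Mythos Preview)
Your proposal is correct and follows essentially the same route as the paper: the same splitting of $X-\oX$ into $I^a$, $I^b$, and the two $c$-pieces (the paper calls them $I^c$ and $I^Z$), the same identification of $I^{c,2}$ as the sole forcing term bounded by $C_R\norm{Z-\oZ}_{\alpha}$, the same Gronwall structure with kernel $g(t,s)=s^{-\alpha}+(t-s)^{-2\alpha}$, and the same moment argument via Burkholder and Garsia--Rodemich--Rumsey. The only cosmetic difference is that you derive the four-point increment bound for $c(s,X_s)-c(s,\oX_s)-c(u,X_u)+c(u,\oX_u)$ by hand from the Fr\'echet derivative, whereas the paper packages this as inequality \eqref{cspsi} and attributes it to \cite[Lemma~7.1]{NR}; your derivation is exactly how that lemma is proved.
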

\begin{proof} %[Proof of Lemma~\ref{prop-quasicontract}]
The proof will be similar to that of Lemma~\ref{prop-apriormoment}, so we will omit some details. Put $\Delta(t) = \norm{X-\oX}_t$, $\Delta_d(t) = d(s,X_s) - d(s,\oX_s)$ for $d\in \set{a,b,c}$, and $\Delta_Z(t)= Z(t)-\oZ(t)$. By assumption H3, $\Delta_d(t)\le C_R\norm{X_t-\oX_t}_\CC \le C_R\Delta(t)$.

Let $\omega\in B_{R,t}$.  Write for $t\in[0,T]$
\begin{gather*}
\abs{X(t)-\oX(t)}\le  \abs{I^a(t)} + \abs{I^b(t)} + \abs{I^c(t)} + \abs{I^Z(t)},
\end{gather*}
where $I^a(t) = \int_0^t \Delta_a(s) ds$, $I^b(t)=\int_0^t \Delta_b(s) dW(s)$, $I^c(t) = \int_0^t \Delta_c(s) dZ(s)$, $I^Z(t) = \int_0^t c(s,\oX_s)d\Delta_Z(t)$. We estimate the terms one by one, starting with $I^a$:
\begin{align*}
\abs{I^a(t)} &\le \int_0^t \abs{\Delta_a(s)}ds \le C_R\int_0^t \Delta(s)ds
\end{align*}
Similarly to $I^c(t)$ in the proof of Lemma~\ref{prop-apriormoment},
\begin{align*}
\abs{I^Z(t)}& \le C\norm{\Delta_Z}_{\alpha;[0,t]}\int_0^t 
\left(\norm{\oX}_{\infty,s}s^{-\alpha} +\norm{\oX}_{1,s}\right)ds\le 
CR\norm{\Delta_Z}_{\alpha;[0,t]}.
\end{align*}
Further,
\begin{align*}
\abs{I^c(t)} &\le CN\int_0^t\left(\abs{\Delta_c(s)}s^{-\alpha} + \int_0^s \abs{\Delta_c(s) - \Delta_c(u)} h(s,u)du\right)ds\\
&\le C_R N \int_0^t\left(\Delta(s)s^{-\alpha} + \int_0^s \abs{\Delta_c(s) - \Delta_c(u)} h(s,u)du\right)ds.
\end{align*}
Similarly to \cite[Lemma 7.1]{NR}, it can be shown that assumptions H3 and H4 imply that for any $s,u\in[0,T]$ and $\psi_1,\dots,\psi_4\in\CC$ with $\norm{\psi_i}\le R$, $i=1,\dots,4$,
\begin{equation}\label{cspsi}
\begin{gathered}
\abs{c(s,\psi_1)-c(u,\psi_2) - c(s,\psi_3) + c(u,\psi_4)}\le C_R\Big(\norm{\psi_1-\psi_2-\psi_3+\psi_4}_\CC \\ + \norm{\psi_1-\psi_3}_\CC\big(\abs{s-u}^\beta+ \norm{\psi_1-\psi_2}_\CC + \norm{\psi_3-\psi_4}_\CC\big)\Big).
\end{gathered}
\end{equation}Therefore, we can estimate $\abs{I^c(t)}\le C_RN\sum_{k=1}^d I^c_k(t)$, where
\begin{align*}
I^c_1(t) &= \int_0^t \Delta(s)s^{-\alpha}ds;\\
I^c_2(t) &= \int_0^t \int_0^s \norm{X_s-\oX_s - \oX_u+ \oX_s}_\CC h(s,u)du\,ds \le \int_0^t \norm{X-\oX}_{1,s}ds\le \int_0^t \Delta(s)ds;
\\
I^c_3(t) &= \int_0^t \int_0^s \norm{X_s-\oX_s}_\CC (s-u)^{\beta-\alpha-1}du\,ds\le C\int_0^t \norm{X-\oX}_{\infty,s}ds\le C\int_0^t \Delta(s)ds;
\\
I^c_4(t) &= \int_0^t \int_0^s \norm{X_s-\oX_s}_\CC\left(\norm{X_s-X_u}_\CC 
+ \norm{\oX_s-\oX_u}_\CC\right)h(s,u)du\\
&\le \int_0^t \norm{X_s-\oX_s}_{\infty,s}\left(\norm{X}_{1,s} + \norm{\oX}_{1,s}\right)\le 2R\int_0^t \Delta(s)ds.
\end{align*}
Therefore, we have
\begin{equation}\label{normxoxit}
\norm{X-\oX}_{\infty,t}\le C_{N,R}\left(\norm{\Delta_Z}_{\alpha;[0,t]}+ \int_0^t\Delta(s) s^{-\alpha}ds\right)  + \norm{I^b}_{\infty;[0,t]}.
\end{equation}

Further, let $0\le s\le t$. Then for $u\le s-t$,
\begin{align*}
\abs{X(u+t-s) -\oX(u+t-s) - X(u) + \oX(u)} = 0;
%\norm{X}_{1,t} = \int_0^t \sup_{u\in[-r,s]}\abs{X(u+t-s)-X(u)}h(t,s)ds \le \norm{I^a}_{1,t} + \norm{I^b}_{1,t} + \norm{I^c}_{1,t}
\end{align*}
for $u\in(s-t,0]$
\begin{align*}
\abs{X(u+t-s) -\oX(u+t-s) - X(u) + \oX(u)} = \abs{X(u+t-s)-\oX(u+t-s)}.
\end{align*}
Consequently, we can write
$$
\norm{X-\oX}_{1,t} \le  J^a(t) + J^b(t) + J^c(t) + J^Z(t),$$
where  
$J^a(t) = \int_0^t \sup_{u\in[s-t,s]}\abs{\int_{u\vee 0}^{u+t-s} \Delta_a(v) dv }$, $J^b(t) = \int_0^t \sup_{u\in[s-t,s]}\abs{\int_{u\vee 0}^{u+t-s} \Delta_b(v) dW(v)}ds$, $J^c(t) = \int_0^t \sup_{u\in[s-t,s]}\abs{\int_{u\vee 0}^{u+t-s} \Delta_c(v) dZ(v) }$, $J^Z(t) = \int_0^t \sup_{u\in[s-t,s]}\abs{\int_{u\vee 0}^{u+t-s} c(\oX_v,v) d\Delta_Z(v) }$. Estimate
\begin{align*}
J^a(t) &\le C_R\int_0^t \max_{u\in[s-t,s]}\int_{u\vee 0}^{u+t-s} \Delta(v)dv\,h(t,s)ds \le C\int_{0}^{t}  \Delta(z) (t-z)^{-\alpha}dz.
\end{align*}
Similarly to $J^c(t)$  in the proof of Lemma~\ref{prop-apriormoment},
\begin{align*}
J^Z(t)& = C\norm{\Delta_Z}_{\alpha;[0,t]}\left(1+\int_0^t 
\big(\norm{\oX}_{\infty,s}(t-s)^{-2\alpha} +\norm{\oX}_{1,s}(t-s)^{-\alpha}\big)ds\right)\le C_R \norm{\Delta_Z}_{\alpha;[0,t]}.
\end{align*}
Further, using \eqref{cspsi}, we can estimate, analogously to $I^c(t)$ above,
$J^c(t) \le C_{N,R}\sum_{k=1}^4 J^c_k(t)$, where
\begin{align*}
J^c_1(t) & = \int_0^t \max_{u\in[s-t,s]}\int_{u\vee 0}^{u+t-s}\abs{\Delta_c(v)}(v-u\vee 0)^{-\alpha}dv\, h(t,s)ds \\
%\\&\le C\int_0^t \max_{u\in[-r,s]}\int_{u\vee 0}^{u+t-s}\Delta(v)(v-u\vee 0)^{-\alpha}dv\, h(t,s)ds\\
& \le C_R\int_0^t \int_s^t \Delta(z) (z-s)^{-\alpha}dz\,h(t,s)ds \le C \int_0^t \Delta(z) (t-z)^{-2\alpha}dz;\\
J^c_2(t) & = \int_0^t \max_{u\in[-r,s]}\int_{u\vee 0}^{u+t-s}\int_{u\vee 0}^{v}
\norm{X -\oX - X +\oX}_{\infty,z}h(v,z)dz\, dv\, h(t,s)ds\\
&\le C\int_0^t \max_{u\in[-r,s]}\int_{u\vee 0}^{u+t-s} \norm{X-\oX}_{1,v} dv\, h(t,s)ds\\
& \le C \int_0^t \int_s^t \norm{X-\oX}_{1,v}dv\, h(t,s) ds \le C  \int_0^t \Delta(v)(t-v)^{-\alpha}dv;\\
J^c_3(t) & = \int_0^t \max_{u\in[-r,s]}\int_{u\vee 0}^{u+t-s}\int_{u\vee 0}^{v}
\norm{X -\oX}_{\infty,v}(v-z)^{\beta-\alpha-1}dz\, dv\, h(t,s)ds\\
&\le C\int_0^t \max_{u\in[-r,s]}\int_{u\vee 0}^{u+t-s} \norm{X-\oX}_{\infty,v} dv\, h(t,s)ds\\
&\le C\int_0^t \int_s^t \norm{X-\oX}_{\infty,v} dv\, h(t,s)ds \le C  \int_0^t \Delta(v)(t-v)^{-\alpha}dv;\\
J^c_4(t) & = \int_0^t \max_{u\in[-r,s]}\int_{u\vee 0}^{u+t-s}\int_{u\vee 0}^{v}
\norm{X -\oX}_{\infty,v}\left(\norm{X_v-X_z}_\CC+ \norm{\oX_v-\oX_z}_\CC\right)h(v,z)dz\, dv\, h(t,s)ds\\
&\le C\int_0^t \max_{u\in[-r,s]}\int_{u\vee 0}^{u+t-s} \norm{X-\oX}_{\infty,v} \left(\norm{X}_{1,v}+\norm{\oX}_{1,v}\right) dv\, h(t,s)ds\\&
\le CR\int_0^t \int_s^t \norm{X-\oX}_{\infty,v} dv\, h(t,s)ds\le C R \int_0^t \Delta(v)(t-v)^{-\alpha}dv.
%\\
%& \le C\left(1+ \int_0^t \int_z^t \left(\norm{X}_{z} +\max_{[-r,0]} \int\right)\right)
\end{align*}
Summing the estimates for $\norm{X-\oX}_{1,t}$, we get
\begin{equation*}
\norm{X-\oX}_{1,t} \le C_{N,R}\left(\norm{\Delta_Z}_{\alpha;[0,t]}+ \int_0^t \Delta(s)(t-s)^{-2\alpha}ds \right) + J^b(t).
\end{equation*}
Combining this with the estimate\eqref{normxoxit},  we obtain 
$$
\norm{X-\oX}_t\le C_{N,R}\left(\norm{\Delta_Z}_{\alpha;[0,t]}+\int_0^t \norm{X}_s g(t,s)ds\right) + \norm{I^b}_{\infty;[0,t]} + J^b(t)
$$
for $\omega \in B_{R,t}$, where $g(t,s) = s^{-\alpha}+(t-s)^{-2\alpha}$.
The rest of the proof goes exactly as in the Lemma~\ref{prop-apriormoment}. Namely, denoting $\ind{t} = \ind{B_{R,t}}$, we obtain
$$\ex{\norm{X}^p_t\ind{t}}\le C_{N,p}\left(\ex{\norm{\Delta_Z(t)}_{\alpha;[0,t]}^p}+ \int_0^t \ex{\norm{X}^p_s\ind{s}}g(t,s)ds\right),
$$
which implies the required statement with the help of the generalized Gronwall lemma.
\end{proof}
\section{Existence and uniqueness of solution}\label{sec:exuniq}
Now we have everything to establish the unique solvability of   \eqref{main-sdde}.
\begin{theorem}\label{thm:exuni}
Equation \eqref{main-sdde} has a unique solution. 
\end{theorem}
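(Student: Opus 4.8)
The overall plan is to read off uniqueness from Lemma~\ref{prop-quasicontract} and to produce a solution by Picard iteration, first for globally Lipschitz coefficients obtained by truncation and then in general by a localization based on the a priori bound of Lemma~\ref{prop-apriormoment}. For uniqueness, let $X$ and $\oX$ be two solutions of \eqref{main-sdde} driven by the same $Z$. I would apply Lemma~\ref{prop-quasicontract} with $\oZ=Z$, so that $\norm{Z-\oZ}_{\alpha;[0,T]}=0$. Its hypothesis $\norm{Z}_{\alpha;[0,T]}\le N$ is no restriction: the proof of that lemma localizes to $A_{N,T}=\set{\norm{Z}_{\alpha;[0,T]}\le N}$ in precisely the manner of Lemma~\ref{prop-apriormoment}, so the estimate holds in the form $\ex{\norm{X-\oX}_{\infty,T}^p\ind{A_{N,T}\cap B_{R,T}}}=0$. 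Since $Z$ is $\gamma$-H\"older with $\gamma>1-\alpha$, one has $\norm{Z}_{\alpha;[0,T]}<\infty$ a.s., so the $A_{N,T}$ exhaust $\Omega$; and as $\norm{X}_T,\norm{\oX}_T<\infty$ a.s. by the definition of a solution, the $B_{R,T}$ exhaust $\Omega$. Letting $N,R\to\infty$ gives $X=\oX$ a.s.

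For existence, I would truncate the coefficients so as to replace the local Lipschitz condition H3 by a global one. For $R>1$ one constructs $a^R,b^R,c^R$ coinciding with $a,b,c$ on $\set{\norm{\psi}_\CC\le R}$, satisfying H1, H2 and H4 with $R$-independent constants, and satisfying H3 with a global Lipschitz constant $C_R$, so that \eqref{cspsi} holds for $c^R$ for all $\psi_i\in\CC$. For such coefficients the $L^p$ Picard scheme
\begin{equation*}
X^{R,n+1}(t)=X(0)+\int_0^t a^R(s,X^{R,n}_s)\,ds+\int_0^t b^R(s,X^{R,n}_s)\,dW(s)+\int_0^t c^R(s,X^{R,n}_s)\,dZ(s),
\end{equation*}
started from $X^{R,0}\equiv\eta(0)$ on $[0,T]$, yields adapted pathwise-continuous iterates. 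Repeating the estimates in the proof of Lemma~\ref{prop-quasicontract} with $\Delta_Z\equiv0$ and the global constant $C_R$ (the truncation makes every bound hold on all of $\Omega$) gives, for $\ind{t}=\ind{A_{N,t}}$ and $g(t,s)=s^{-\alpha}+(t-s)^{-2\alpha}$,
\begin{equation*}
\ex{\norm{X^{R,n+1}-X^{R,n}}_t^p\ind{t}}\le C_{N,R,p}\int_0^t \ex{\norm{X^{R,n}-X^{R,n-1}}_s^p\ind{s}}\,g(t,s)\,ds.
\end{equation*}
As $g$ is integrable on $[0,t]$ (since $2\alpha<1$), iterating this inequality produces $n$-fold convolutions of $g$ with factorially decaying $L^1$-norm, so $\sum_n\ex{\norm{X^{R,n+1}-X^{R,n}}_T^p\ind{A_{N,T}}}^{1/p}<\infty$; letting $N\to\infty$ shows $\set{X^{R,n}}_n$ is a.s.\ Cauchy for $\norm{\cdot}_T$. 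Its limit $X^R$ is adapted and pathwise continuous, and passing to the limit in the three integrals — using the continuity of the generalized Lebesgue--Stieltjes integral in the $\norm{\cdot}_t$-topology via \eqref{integr-estim} for the $dZ$ term and the It\^o isometry for the $dW$ term — shows that $X^R$ solves the truncated equation.

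It remains to remove the truncation. Because the proof of Lemma~\ref{prop-apriormoment} uses only H1, H2 and H4, all of which hold for $a^R,b^R,c^R$ with $R$-independent constants, it applies to $X^R$ and yields $\ex{\norm{X^R}_{T}^p\ind{A_{N,T}}}\le C_{N,p}$ uniformly in $R$. On the event $G_R=\set{\norm{X^R}_{\infty,T}\le R}$ one has $\norm{X^R_s}_\CC\le R$ for every $s$, so $a^R,b^R,c^R$ agree with $a,b,c$ along $X^R$ and $X^R$ solves the original equation \eqref{main-sdde} there. By the Markov inequality $\mathsf{P}(G_R^c\cap A_{N,T})\le C_{N,p}R^{-p}\to0$, so $\bigcup_R G_R$ has full measure in each $A_{N,T}$; by the uniqueness established above the $X^R$ agree on the overlaps $G_R\cap G_{R'}$, and therefore glue into a single process $X$ solving \eqref{main-sdde} a.s.\ on $\bigcup_N A_{N,T}=\Omega$.

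The step I expect to be the main obstacle is twofold: first, the convergence of the Picard iterates, which requires controlling the iterated singular kernels so that the series of increments converges and justifying the passage to the limit in the pathwise integral in the $\norm{\cdot}_t$-topology; and second, carrying out the truncation so that it genuinely preserves H1--H4, in particular the differentiability required by H2 and the second-order estimate \eqref{cspsi} for $c^R$, which is delicate because the supremum norm on $\CC$ is not smooth. The remaining analytic work — the Young-integral bound \eqref{integr-estim}, the Burkholder and Garsia--Rodemich--Rumsey control of the It\^o term, and the generalized Gronwall lemma — is already contained in Lemmas~\ref{prop-apriormoment} and~\ref{prop-quasicontract}.
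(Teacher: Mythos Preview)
Your uniqueness argument via Lemma~\ref{prop-quasicontract} with $\oZ=Z$ is exactly the paper's. For existence, however, the paper takes a genuinely different route that sidesteps both Picard iteration and coefficient truncation. It fixes $N\ge1$, stops $Z$ at $\tau_N=\inf\{t:\norm{Z}_{\alpha;[0,t]}\ge N\}$, and mollifies the stopped process to $Z^{N,n}(t)=n\int_{(t-1/n)\vee0}^t Z^N(s)\,ds$. Since $Z^{N,n}$ is absolutely continuous, the $dZ^{N,n}$ integral is absorbed into the drift, so the approximating equation is an ordinary It\^o stochastic functional differential equation with random drift satisfying linear growth and \emph{local} Lipschitz bounds; its unique solvability is quoted as a black box from the classical theory (Mohammed, Mao). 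Lemma~\ref{prop-quasicontract} is then used not as a contraction for iterates but as a stability estimate in the driver: since $\norm{Z^{N,n}-Z^N}_{\alpha;[0,T]}\to0$, the sequence $\{X^{N,n}\}_n$ is Cauchy in probability in $\norm{\cdot}_T$, with Lemma~\ref{prop-apriormoment} supplying the uniform bound needed to send $R\to\infty$. One passes to the limit in each integral and finally patches the $X^N$ over the sets $A_{N,T}$.

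The payoff of the paper's device is that it never touches the coefficients, so it completely avoids the obstacle you rightly flag: building a $c^R$ that agrees with $c$ on a ball, stays Fr\'echet differentiable in $\psi$ (H2), and satisfies \eqref{cspsi} globally is awkward because $\norm{\cdot}_\CC$ is not smooth, and \eqref{cspsi} genuinely needs Lipschitzness of $\partial_\psi c$, not merely of $c$. Your Picard scheme is the more self-contained alternative, and the contraction bound you outline is exactly the computation behind Lemma~\ref{prop-quasicontract}; if you pursue it, a cleaner fix than truncating $c$ is to leave the coefficients untouched, prove a uniform-in-$n$ version of Lemma~\ref{prop-apriormoment} for the iterates (its proof uses only H1, H2, H4 with the original constants), and run the contraction on the events $\{\norm{X^{n}}_t\le R,\ \norm{X^{n-1}}_t\le R\}$ where \eqref{cspsi} already applies.
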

\begin{proof}
For convenience, the proof will be divided into several logical steps.

\textit{Step 1. Approximations by usual stochastic delay differential equations}

Fix some $N\ge 1$ and define $\tau_N = \inf\set{t>0\colon \norm{Z}_{\alpha;[0,t]}\ge N}$, $Z^N(t) = Z(t\wedge \tau_N)$, $t\ge 0$. For each integer $n\ge 1$ define a smooth 
approximation of $Z^N$ by 
$$
Z^{N,n}(t) = n\int_{(t-1/n)\vee 0}^{t} Z^N(s)ds
$$
and consider the equation 
\begin{equation*}
X^{N,n}(s) = X(0) + \int_{0}^{t} a(s,X^{N,n}_s)dt +  \int_0^t b(s,X^{N,n}_s)dW(s) + \int_0^t c(s,X^{N,n}_s)dZ^{N,n}(s)
\end{equation*}
with the same initial condition $X^{N,n}(s) = \eta(s)$, $s\in[-r,0]$. Since $Z^{N,n}$ is absolutely continuous, this is a usual stochastic delay differential equation (or, in the terminology of \cite{mohammed}, stochastic functional differential equation)
\begin{equation}\label{ito-sdde}
X^{N,n}(s) = X(0) + \int_{0}^{t} d^{N,n}(s,X^{N,n}_s) dt +  \int_0^t b(s,X^{N,n}_s)dW(s)
\end{equation}
with a random drift $d^{N,n}(s,\psi) = a(s,\psi) + c(s,\psi)\frac{d}{ds}Z^{N,n}(s)$. Clearly, $\abs{\frac{d}{ds}Z^{N,n}(s)}\le nN$. Therefore, the coefficients of \eqref{ito-sdde} satisfy the linear growth condition: for all $s\in[0,T]$, $\psi\in\CC$,
\begin{equation}\label{lingrowth}
\abs{d^{N,n}(s,\psi)}+\abs{b(s,\psi)}\le C_{N,n}\left(1+\norm{\psi}_\CC\right),
\end{equation}
and the local Lipschitz condition: for any $R>0$ and all $s\in[0,T]$, $\psi_1,\psi_2\in\CC$ with $\norm{\psi_1}_\CC\le R, \norm{\psi_2}_\CC\le R$,
\begin{equation}\label{lipschitz}
\abs{d^{N,n}(s,\psi_1)-d^{N,n}(s,\psi_2)}+\abs{b(s,\psi_1)-b(s,\psi_2)}\le C_{N,n,R}\norm{\psi_1-\psi_2}_\CC.
\end{equation}
In \cite[Theorem I.2]{mohammed} and in \cite[Chapter 5, Theorem 2.5]{mao}, the unique solvability of \eqref{ito-sdde} was formulated for non-random coefficients satisfying conditions \eqref{lingrowth} and \eqref{lipschitz}. However, the arguments given there are easily seen to extend to adapted coefficients satisfying \eqref{lingrowth} and \eqref{lipschitz} with  a non-random constant, which is the case here.  Thus, \eqref{ito-sdde} has a unique solution.
  
\textit{Step 2. Convergence of approximations}

First we show that, for a fixed $N\ge 1$, the sequence $\set{X^{N,n},n\ge 1}$ is fundamental in probability in the norm $\norm{\cdot}_T$. Indeed, it is easy to show (see e.g.\ \cite[Lemma 2.1]{mbfbm-limit}) that $\norm{Z^{N,n}-Z^N}_{\alpha;[0,T]}\to 0$, $n\to\infty$, a.s. Then, in view of the boundedness, $\ex{\norm{Z^{N,n}-Z^N}_{\alpha;[0,T]}^p}\to 0$ for any $p\ge 1$. Therefore,   Lemma~\ref{prop-quasicontract} and the Markov inequality imply that
\begin{equation}\label{fundprob}
\mathsf{P}\left(\norm{X^{N,n}-X^{N,m}}_T>\eps,\norm{X^{N,n}}_T\le R,\norm{X^{N,m}}_T\le R\right)\to 0,\quad n,m\to\infty,
\end{equation}
for any $\eps>0$, $R\ge 1$. Hence,
$$
\limsup_{n,m\to\infty}\mathsf{P}\left(\norm{X^{N,n}-X^{N,m}}_T>\eps\right)\le 2\sup_{n\ge 1}\mathsf{P}\left(\norm{X^{N,n}}_T> R\right)
$$
for any $\eps>0$, $R\ge 1$. The convergence $\ex{\norm{Z^{N,n}-Z^N}_{\alpha;[0,T]}^p}\to 0$, $n\to\infty$ implies that $\sup_{n\ge 1}\ex{\norm{Z^{N,n}}_{\alpha;[0,T]}^p}<\infty$. Then, due to Lemma~\ref{prop-apriormoment} and the Markov inequality,
$$
\sup_{n\ge 1}\mathsf{P}\left(\norm{X^{N,n}}_T> R\right)\to 0,\quad n\to\infty,
$$
whence, letting $R\to\infty$ in \eqref{fundprob}, we deduce $\mathsf{P}\left(\norm{X^{N,n}-X^{N,m}}_T>\eps\right)\to 0$, $n,m\to\infty$, as required. Therefore, there exists some random process $X^N$ such that $\norm{X^{N,n}-X^N}_{T}\to 0$, $n\to\infty$, in probability. There is an almost surely convergent subsequence, and without loss of generality we can assume that $\norm{X^{N,n}- X^N}_T\to 0$, $n\to\infty$, a.s.

\textit{Step 3. The limit provides a solution.}

In order to prove that $X^N$ solves equation \eqref{main-sdde} with $Z$ replaced by $Z^N$, we need to show that the integrals in \eqref{ito-sdde} converge to the correspondent integrals for $X^N$. 
Since the convergence $\norm{X^{N,n}- X^N}_T\to 0$, $n\to\infty$, implies the uniform convergence on $[0,T]$, we easily obtain
$$
\int_{0}^{t} a(s,X^{N,n}_s)ds \to \int_{0}^{t} a(s,X^N_s)ds,\quad n\to\infty,\quad \text{a.s.}
$$
Similarly to $I^c(t)$ and $I^Z(t)$ in the proof of Lemma~\ref{prop-quasicontract}, we have
\begin{align*}
&\abs{\int_0^t c(s,X^N_s)dZ^N(s)-\int_0^t c(s,X^{N,n}_s)dZ^{N,n}(s)}\\
&\qquad \le C_{N}\left(\norm{X^N}_{t}+ \norm{X^{N,n}}_{t}\right)\left(\norm{Z^N-Z^{N,n}}_{\alpha;[0,t]} + \int_0^t \norm{X^N - X^{N,n}}_{t}ds\right)\to 0
\end{align*}
as $n\to\infty$ a.s. Finally, denoting $\ind{t}=\ind{\norm{X^N}_t< R,\norm{X^{N,n}}_t< R}$, we have
\begin{align*}
&\ex{\left(\int_0^t b(s,X^N_s)dW(s)-\int_0^t b(s,X^{N,n}_s)dW(s)\right)^2\ind{t}}\\
& \qquad \le \int_{0}^{t}\ex{\left(b(s,X^{N}_s)-b(s,X^{N,n}_s)\right)^2\ind{s}}ds
\\
& \qquad \le 
\int_{0}^{t}\ex{\norm{X^{N}-X^{N,n}}^2_s\ind{s}}ds\to 0,\quad n\to\infty.
\end{align*}
So we have that $$\left(\int_0^t b(s,X^N_s)dW(s)-\int_0^t b(s,X^{N,n}_s)dW(s)\right)\ind{t}\to 0,\quad n\to\infty$$ in probability. Thanks to the convergence $\norm{X^{N,n}-X^N}_{T}\to 0$, $n\to\infty$, the event $\set{\norm{X^N}_t< R}$ implies $\set{\norm{X^{N,n}}_t< R}$ for $n$ large enough, therefore we have the convergence of the integrals in probability on $\set{\norm{X^N}_t< R}$ and arbitrary $R\ge 1$, therefore on $\Omega$. Thus, we have that $X^N$ is a solution to 
$$
X^{N}(s) = X(0) + \int_{0}^{t} a(s,X^{N}_s) dt +  \int_0^t b(s,X^{N}_s)dW(s) + \int_0^t c(s,X^N_s)dZ^N(s)
$$
with $X^N(s) = \eta(s)$, $s\in[-r,0]$. 

From Lemma~\ref{prop-quasicontract}, it is obvious that the processes
$X^N$ and $X^M$ with $M\ge N$ coincide a.s.\ on the set $A_{N,R}=\set{\norm{Z}_{\alpha;[0,T]}\le N}$. Therefore, there exists a process $X$ such that for each $N\ge 1$, $X^N = X$ a.s.\ on $A_{N,T}$. Consequently, $X$ solves \eqref{main-sdde} on each of the sets $A_{N,T}$, $N\ge 1$, hence, almost surely. 

Finally, the uniqueness follows from Lemma~\ref{prop-quasicontract}: each solution to \eqref{main-sdde} must coincide with $X$ on each of the sets $A_{N,T}$, hence, almost surely.
\end{proof}

\section{Integrability and convergence of solutions}

Now we investigate the question when the moments of $X$ are finite. Naturally, we need to require certain integrability of the driver $Z$. It is quite involved to prove the integrability under assumptions H1--H4 (for equations without delay, the corresponding result is proved in \cite{mbfbm-integr}). So we prove the integrability under an additional assumption that $b$ is bounded.
\begin{theorem}\label{thm:moments}
Assume, that, in addition to H1--H4, $\abs{b(t,\psi)}\le C$ for any $t\in[0,T]$, $\psi\in \CC$,
and $\ex{\exp\set{c\norm{Z}_{\alpha;[0,T]}^{1/(1-\alpha)}}}<\infty$
for all $c>0$. Then the solution of \eqref{main-sdde} satisfies $\ex{\norm{X}_T^p}<\infty$
for all $p\ge 1$, in particular, all moments of the solution are finite.
\end{theorem}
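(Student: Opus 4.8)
The plan is to upgrade the a priori bound of Lemma~\ref{prop-apriormoment} into a \emph{pathwise} estimate of $\norm{X}_T$ that is exponential in $\norm{Z}_{\alpha;[0,T]}^{1/(1-\alpha)}$, and then to integrate this bound using the two available sources of exponential integrability: the hypothesis on $Z$ and the boundedness of $b$. Write $\Lambda=\norm{Z}_{\alpha;[0,T]}$ and let $V(t)=\int_0^t b(s,X_s)\,dW(s)$ be the martingale part. Since $\abs{b}\le C$, the Garsia--Rodemich--Rumsey and Burkholder inequalities (exactly as used for $\xi(t)$ in the proof of Lemma~\ref{prop-apriormoment}) show that the moments of $M:=\norm{V}_{\alpha;[0,T]}$ grow at a Gaussian rate, $\bigl(\ex{M^q}\bigr)^{1/q}\le C\sqrt q$, so that $\ex{\exp\set{cM^{1/(1-\alpha)}}}<\infty$ for every $c>0$ (here $1/(1-\alpha)<2$ is used). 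Thus $M$ enjoys the same kind of exponential integrability that is assumed for $\Lambda$, and the two can be treated on equal footing.

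The deterministic core of the argument is a bound of the form
$$
\norm{X}_T\le C\bigl(1+\Lambda+M\bigr)^{a}\exp\set{c\,(\Lambda+M)^{1/(1-\alpha)}}
$$
for suitable constants $a,c,C$ depending only on $T$ and the structural constants in H1--H4. The crucial point — and the reason the crude global bound of Lemma~\ref{prop-apriormoment}, whose kernel $g(t,s)\le C\,t^{2\alpha}s^{-2\alpha}(t-s)^{-2\alpha}$ yields only the exponent $1/(1-2\alpha)$, is insufficient (that exponent exceeds $1/(1-\alpha)$ and is not controlled by the hypothesis) — is to obtain the \emph{sharp} exponent $1/(1-\alpha)$. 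I would do this by partitioning $[0,T]$ into $n$ equal subintervals $[\tau_k,\tau_{k+1}]$ of length $\delta$ and deriving a local estimate on each of them in terms of a H\"older seminorm $[X]_{\lambda;[\tau_k,\tau_{k+1}]}$ with $\lambda\in(\alpha,\gamma\wedge\tfrac12)$. Repeating the computations behind \eqref{integr-estim} on $[\tau_k,\tau_{k+1}]$, using $\norm{Z}_{\alpha;[\tau_k,\tau_{k+1}]}\le\Lambda$ together with $\norm{X_s-X_w}_\CC\le [X]_\lambda\abs{s-w}^\lambda$, the self-interaction of the H\"older seminorm carries the factor $C\Lambda\,\delta^{1-\alpha}$. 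Choosing $\delta$ so that $C\Lambda\,\delta^{1-\alpha}\le\tfrac12$ — that is, $\delta\sim\Lambda^{-1/(1-\alpha)}$ and $n\sim(1+\Lambda)^{1/(1-\alpha)}$ — lets me absorb that term and conclude that both $\sup_{[\tau_k,\tau_{k+1}]}\abs{X}$ and the local H\"older seminorm grow by at most a bounded factor from one subinterval to the next. Iterating over the $n$ subintervals produces the factor $\exp\set{c\,n}=\exp\set{c\,(1+\Lambda+M)^{1/(1-\alpha)}}$.

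With this pathwise bound in hand the conclusion is quick. Using $(\Lambda+M)^{1/(1-\alpha)}\le 2^{1/(1-\alpha)}\bigl(\Lambda^{1/(1-\alpha)}+M^{1/(1-\alpha)}\bigr)$ and absorbing the polynomial prefactor $(1+\Lambda+M)^{ap}$ into the exponential (legitimate since $1/(1-\alpha)>1$), I bound $\ex{\norm{X}_T^p}$ by a constant times $\ex{\exp\set{c'\Lambda^{1/(1-\alpha)}}\exp\set{c'M^{1/(1-\alpha)}}}$. One application of the Cauchy--Schwarz inequality splits this into $\ex{\exp\set{2c'\Lambda^{1/(1-\alpha)}}}^{1/2}$, finite by hypothesis, times $\ex{\exp\set{2c'M^{1/(1-\alpha)}}}^{1/2}$, finite by the first paragraph. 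This gives $\ex{\norm{X}_T^p}<\infty$ for every $p\ge1$.

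I expect the main obstacle to be the deterministic local estimate with the correct power $\delta^{1-\alpha}$. Two points need care. First, one must check that it is the H\"older seminorm, rather than the norm $\norm{\cdot}_{1,t}$ used in Lemma~\ref{prop-apriormoment}, that produces $\delta^{1-\alpha}$ in place of $\delta^{1-2\alpha}$: the gain comes precisely from measuring increments in a fixed H\"older exponent $\lambda$ instead of integrating the kernel $h(t,s)=(t-s)^{-1-\alpha}$. Second, the delay forces the coefficients on $[\tau_k,\tau_{k+1}]$ to depend on the segment of $X$ reaching a distance $r$ into earlier subintervals; one must therefore carry the running supremum and H\"older seminorm over $[-r,\tau_k]$ as the initial data for the $k$-th step and verify that the past contributions are dominated by the bounds already established on the preceding intervals, so that the per-interval growth stays bounded and the overall factor remains $\exp\set{c\,n}$. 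The martingale increments over each subinterval are harmless, being controlled additively through $M$.
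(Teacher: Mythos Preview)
Your partitioning strategy is a legitimate alternative to the paper's approach and does yield the sharp exponent $1/(1-\alpha)$, but the paper reaches the same pathwise bound by a cleaner device: exponentially weighted norms. Concretely, the paper sets $\norm{X}_{\lambda;a}=\sup_{s\le T}e^{-\lambda s}\norm{X}_{a,s}$ for $a\in\{\infty,1\}$ and rewrites the a~priori inequalities \eqref{normxit} and \eqref{normx1t} in these norms; the convolutions $\int_0^t e^{-\lambda(t-u)}u^{-\alpha}\,du$ and $\int_0^t e^{-\lambda(t-u)}(t-u)^{-\kappa}\,du$ produce factors $\lambda^{\alpha-1}$ and $\lambda^{2\alpha-1}$ in front of $\norm{X}_{\lambda;\infty}$ and $\norm{X}_{\lambda;1}$. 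Choosing $\lambda\sim N^{1/(1-\alpha)}$ makes those factors at most $1/2$, and then $\norm{X}_T\le e^{\lambda T}\bigl(\norm{X}_{\lambda;\infty}+\norm{X}_{\lambda;1}\bigr)\le C\exp\set{CN^{1/(1-\alpha)}}(1+\zeta)$ with $\zeta=\norm{I^b}_{\infty;[0,T]}+J^b(T)$. This avoids any iteration over subintervals and any bookkeeping of how the delay reaches across the partition. More importantly, the martingale contribution $\zeta$ stays \emph{outside} the exponential, so only the polynomial bound $\ex{\zeta^p}<\infty$ is needed---immediate from Burkholder and GRR once $b$ is bounded---whereas you ask for exponential integrability of $M$, which is stronger than necessary.

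There is also a concrete slip in your definition of $M$. With the paper's norm, $\norm{g}_{\alpha;[0,T]}$ controls $(1-\alpha)$-H\"older increments of $g$; since $1-\alpha>1/2$ while $V=\int_0^{\cdot}b(s,X_s)\,dW(s)$ is at best $(1/2-\varepsilon)$-H\"older, one has $\norm{V}_{\alpha;[0,T]}=\infty$ a.s. What GRR actually delivers (cf.\ the estimate for $\xi(t)$ in Lemma~\ref{prop-apriormoment}) is control of the $\lambda$-H\"older seminorm $[V]_{\lambda}$ for any $\lambda<1/2$; taking $M=[V]_{\lambda}$ with $\lambda\in(\alpha,1/2)$, your Gaussian-tail claim $\bigl(\ex{M^q}\bigr)^{1/q}\le C\sqrt q$ and hence $\ex{\exp\set{cM^{1/(1-\alpha)}}}<\infty$ become correct, and the rest of your outline goes through.
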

\begin{proof}The proof follows the scheme of \cite[Lemma 4.1]{mbfbm-jumps}. We will use the notation of Lemma~\ref{prop-apriormoment}. 

Define for $\lambda>0$, $t\in[0,T]$, $a\in\set{1,\infty}$ \ $\norm{X}_{\lambda;a} = \sup_{s\in[0,T]}e^{-\lambda s}\norm{X}_{a,s}$. 
Denote also $\zeta = \norm{I^b}_{\infty;[0,T]} + J^b(T)$.
Then from 
\eqref{normx1t} we get for $\omega\in A_{N,t}$
\begin{align*}
\norm{X}_{\lambda;\infty}&\le CN\left(1+ \sup_{s\le T}e^{-\lambda s}\int_0^s\left(\norm{X}_{\infty,u}u^{-\alpha} + \norm{X}_{1,u}\right)du \right) + \zeta\\
&\le CN\left(1+ \sup_{s\le T}\int_0^se^{\lambda(u-s)}\left(e^{-\lambda u}\norm{X}_{\infty,u}u^{-\alpha} + e^{-\lambda u}\norm{X}_{1,u}\right)du \right) + \zeta\\
&\le CN\left(1+ \sup_{s\le T}\int_0^s e^{\lambda(u-s)}\left(u^{-\alpha} \norm{X}_{\lambda;\infty}  + \norm{X}_{\lambda;1}\right)du \right) + \zeta\\
&\le
CN\left(1+ \lambda^{\alpha-1}\norm{X}_{\lambda;\infty}  + \lambda^{-1}\norm{X}_{\lambda;1}\right) + \zeta,
\end{align*}
where we have used the estimate
\begin{align*}
&\sup_{s\le T}\int_0^s e^{\lambda(u-s)}u^{-\alpha} du =\sup_{s\le T}\lambda^{-1}\int_0^{\lambda s} e^{-z}( s - z/\lambda)^{-\alpha} dz\\&\quad = \sup_{s\le T}\lambda^{\alpha-1}\int_0^{\lambda s} e^{-z}(\lambda s - z)^{-\alpha} dz\le  \lambda^{\alpha-1}\sup_{a>0}\int_0^{a} e^{-z}(a - z)^{-\alpha} dz =C\lambda^{\alpha-1}.
\end{align*}
Similarly, from \eqref{normxit},
\begin{align*}
&\norm{X}_{\lambda;1}\le N\left(1+ \sup_{s\le T}e^{-\lambda s}\int_0^s\left(\norm{X}_{\infty,u}(s-u)^{-2\alpha} + \norm{X}_{1,u}(s-u)^{-\alpha}\right)du \right) + \zeta\\
&\le N\left(1+ \sup_{s\le T}\int_0^s e^{\lambda(u-s)}\left(e^{-\lambda u}\norm{X}_{\infty,u}(s-u)^{-2\alpha} + e^{-\lambda u}\norm{X}_{1,u}(s-u)^{-\alpha}\right)du \right) + \zeta\\
&\le N\left(1+ \sup_{s\le T}\int_0^s e^{\lambda(u-s)}\left( \norm{X}_{\lambda;\infty}(s-u)^{-2\alpha}  + \norm{X}_{\lambda;1}(s-u)^{-\alpha}\right)du \right) + \zeta\\
&\le
CN\left(1+ \lambda^{2\alpha-1}\norm{X}_{\lambda;\infty}  + \lambda^{\alpha-1}\norm{X}_{\lambda;1}\right) + \zeta.
\end{align*}
Therefore, we have arrived at the system of inequalities
\begin{align*}
\norm{X}_{\lambda;\infty}&\le KN\left(1+ \lambda^{\alpha-1}\norm{X}_{\lambda;\infty}  + \lambda^{-1}\norm{X}_{\lambda;1}\right) + \zeta,\\
\norm{X}_{\lambda;t}&\le KN\left(1+ \lambda^{2\alpha-1}\norm{X}_{\lambda;\infty}  + \lambda^{\alpha-1}\norm{X}_{\lambda;1}\right) + \zeta.
\end{align*}
Setting $\lambda = 4KN^{1/(1-\alpha)}$, 
it is easy to deduce from this system that 
$$
\norm{X}_{\lambda;\infty} + \norm{X}_{\lambda;1}\le C N^{1/(1-\alpha)}(1+\zeta),
$$
whence 
$$
\norm{X}_T\le e^{\lambda T}\left(\norm{X}_{\lambda;\infty} + \norm{X}_{\lambda;1}\right)\le C\exp\set{C N^{1/(1-\alpha)}}(1+\zeta)
$$
for $\omega\in A_{N,t}$. 
Thus, in order to prove the required result, it remains to show that all moments of $\zeta$ are finite. The argument is similar to the estimation of $I^b$ and $J^b$ in Lemma~\ref{prop-apriormoment}, so we omit some details.

Take arbitrary $p>4/(1-2\alpha)$. By the Burkholder inequality,
\begin{equation}\label{ibt}
\begin{aligned}
 \ex{\norm{I^b}_{\infty;[0,T]}^p} \le C_p\ex{\left(\int_0^T \abs{b(s,X_s)}^2 ds\right)^{p/2}}<\infty.
\end{aligned}
\end{equation}
Further, we have
\begin{equation} %\label{Jbt}
\ex{J^b(T)^p} \le C_p \ex{\left(\int_0^T\sup_{u\in[s-T,s]}\abs{ \int_{u\vee 0}^{u+T-s} b(v,X_v) dW(v)} h(T,s)ds \right)^p}.
\end{equation}
By the Garsia--Rodemich--Rumsey inequality,
for any $r,z\in[0,T]$
$$\abs{\int_{r}^{z} b(v,X_v)\ind{v} dW(v)}\le C_p \xi(T)\abs{r-z}^{1/2-2/p},$$ where
$$
\xi(T) = \left(\int_0^t \int_0^y \frac{\abs{\int_x^y b(v,X_v) dW(v)}^{p}} {\abs{x-y}^{p/2}}dx\,dy\right)^{1/p}.
$$
From the estimate
\begin{align*}
&\ex{\xi(T)^p}= \int_0^T \int_0^y \frac{\ex{\abs{\int_x^y b(v,X_v) dW(v)}^{p}}} {\abs{x-y}^{p/2}}dx\,dy\\
&\qquad\le C_p \int_0^T \int_0^y \frac{\ex{\left(\int_x^y \abs{b(v,X_v)}^2 dv\right)^{p/2}}} {(y-x)^{p/2}} dx\,dy\le C_p \int_0^T \int_0^y 1 dx\,dy<\infty
\end{align*}
we obtain, as in Lemma~\ref{prop-apriormoment}, $\ex{J^b(T)^p}<\infty$. Taking into account \eqref{ibt}, we get that $\ex{\zeta^p}<\infty$, thus finishing the proof.
\end{proof}
\begin{remark}
The assumption on $Z$ from Theorem~\ref{thm:moments} is fulfilled e.g.\ for a fractional Brownian motion with Hurst parameter $H>1/2$ (with any $\alpha>1-H$), see e.g.\ \cite[Theorem 4]{mbfbm-jumps}.
\end{remark}

Finally, we state a result on stability of solutions to \eqref{main-sdde} with respect to the driver $Z$. Its proof virtually repeats Step 3 of the proof of Theorem~\ref{thm:exuni} and therefore is omitted.
Let for $n\ge 1$ \ $Z^n=\set{Z^n(t),n\ge 1}$ be an $\mathbb F$-adapted $\gamma$-H\"older continuous process, and $X^n$ be a solution to 
\begin{equation}\label{sdde-n}
X^n(s) = X(0) + \int_{0}^{t} a(s,X^n_s)dt +  \int_0^t b(s,X^n_s)dW(s) + \int_0^t c(s,X^n_s)dZ^n(s)
\end{equation}
with the initial condition $X^n(s) = \eta(s)$, $s\in[-r,0]$.
\begin{proposition}
Let $X$ and $X^n$ be solutions of \eqref{main-sdde} and \eqref{sdde-n} respectively, and $\norm{Z-Z^n}_{\alpha;[0,T]}\to 0$, $n\to \infty$, in probability. Then $\norm{X-X^n}_T\to 0$, $n\to\infty$, in probability.
\end{proposition}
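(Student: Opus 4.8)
The plan is to deduce the convergence from the quasi-contraction estimate of Lemma~\ref{prop-quasicontract} by a localization argument that mirrors Steps~2--3 of the proof of Theorem~\ref{thm:exuni}. The difficulty is that Lemma~\ref{prop-quasicontract} presupposes the \emph{deterministic} bounds $\norm{Z}_{\alpha;[0,T]}\le N$, $\norm{\oZ}_{\alpha;[0,T]}\le N$ together with the boundedness event $B_{R,T}$, whereas here only convergence in probability of the drivers is available. Hence the whole estimate must be run on a ``good'' event whose complement has probability close to zero uniformly in $n$. Fix $\eps>0$ and $p\ge 4/(1-2\alpha)$; it suffices to show that $\limsup_n\mathsf{P}(\norm{X-X^n}_T>\eps)$ can be made arbitrarily small.

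For $N,R\ge 1$ introduce $A_{N,T}=\set{\norm{Z}_{\alpha;[0,T]}\le N}$, $A^n_{N,T}=\set{\norm{Z^n}_{\alpha;[0,T]}\le N}$ and $B_{R,T}=\set{\norm{X}_T\le R,\ \norm{X^n}_T\le R}$, set $G^n_{N,R}=A_{N,T}\cap A^n_{N,T}\cap B_{R,T}$, and split
\[
\mathsf{P}(\norm{X-X^n}_T>\eps)\le \mathsf{P}\big(\norm{X-X^n}_T>\eps,\ G^n_{N,R}\big)+\mathsf{P}\big((G^n_{N,R})^c\big).
\]
For the complement I would bound $\mathsf{P}((G^n_{N,R})^c)$ by $\mathsf{P}(\norm{Z}_{\alpha;[0,T]}>N)+\mathsf{P}(\norm{Z^n}_{\alpha;[0,T]}>N)+\mathsf{P}(\norm{X}_T>R)+\mathsf{P}(\norm{X^n}_T>R)$ and make each piece small uniformly in $n$. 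The first term is small for large $N$ since $\norm{Z}_{\alpha;[0,T]}<\infty$ a.s.; the family $\set{\norm{Z^n}_{\alpha;[0,T]}}$ is tight because $\norm{Z^n}_{\alpha;[0,T]}\le\norm{Z}_{\alpha;[0,T]}+\norm{Z-Z^n}_{\alpha;[0,T]}$ and the last summand tends to $0$ in probability, so $\sup_n\mathsf{P}(\norm{Z^n}_{\alpha;[0,T]}>N)$ is small for $N$ large. With such an $N$ fixed, Lemma~\ref{prop-apriormoment} applied to $X$ and to each $X^n$ (the constant $C_{N,p}$ there depends only on $N$ and $p$, not on the driver) together with the Markov inequality gives $\mathsf{P}(\norm{X}_T>R)\le C_{N,p}R^{-p}+\mathsf{P}(A_{N,T}^c)$ and the analogous uniform bound for $X^n$, so $\sup_n\mathsf{P}(\norm{X^n}_T>R)$ is small for $R$ large.

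On the good event $G^n_{N,R}$ all the hypotheses of Lemma~\ref{prop-quasicontract} hold pathwise, so re-running its proof with the enlarged indicator $\ind{G^n_{N,R}}$ yields
\[
\ex{\norm{X-X^n}_T^p\,\ind{G^n_{N,R}}}\le C_{N,R,p}\,\ex{\norm{Z-Z^n}_{\alpha;[0,T]}^p\,\ind{G^n_{N,R}}}.
\]
On $G^n_{N,R}$ one has $\norm{Z-Z^n}_{\alpha;[0,T]}\le 2N$, so the integrand on the right is dominated by the constant $(2N)^p$ and tends to $0$ in probability; by bounded convergence the right-hand side tends to $0$ as $n\to\infty$ for fixed $N,R$, and the Markov inequality then gives $\mathsf{P}(\norm{X-X^n}_T>\eps,\ G^n_{N,R})\to 0$. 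Choosing first $N$, then $R$ large to make the complement term small uniformly in $n$, and finally letting $n\to\infty$, we conclude that $\limsup_n\mathsf{P}(\norm{X-X^n}_T>\eps)$ is arbitrarily small, which proves the claim.

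I expect the main obstacle to be precisely the bookkeeping of this localization: reconciling the a.s.\ boundedness built into Lemma~\ref{prop-quasicontract} with the merely in-probability convergence of the drivers, and guaranteeing that both the a priori bounds of Lemma~\ref{prop-apriormoment} and the tightness of $\set{\norm{Z^n}_{\alpha;[0,T]}}$ are \emph{uniform in $n$}, so that the deterministic hypotheses of the quasi-contraction estimate can be installed on an event of probability close to one.
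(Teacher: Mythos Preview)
Your proposal is correct and is precisely the argument the paper has in mind: the paper omits the proof, stating only that it ``virtually repeats Step~3 of the proof of Theorem~\ref{thm:exuni}'', and your write-up is a careful execution of exactly that localization scheme (really the Step~2 argument---Lemma~\ref{prop-quasicontract} plus Lemma~\ref{prop-apriormoment} and Markov---with the additional localization on the drivers that was unnecessary in Step~2 because there $\norm{Z^N}_{\alpha;[0,T]}\le N$ held deterministically). The key observations you make---that the indicator of $G^n_{N,R,t}=A_{N,t}\cap A^n_{N,t}\cap B_{R,t}$ is non-increasing and adapted so that the Burkholder/Garsia estimates for $I^b$, $J^b$ go through, and that the constant in Lemma~\ref{prop-apriormoment} depends on $N,p$ but not on the particular driver, giving uniformity in $n$---are exactly what is needed and are correct.
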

%\bibliographystyle{plain}
%\bibliography{mbfbm-delay}

\end{document}